 \newtheorem{thm}{Theorem}[section]
 \theoremstyle{definition}
 \newtheorem{defn}[thm]{Definition}
 \theoremstyle{remark}
 \numberwithin{equation}{section}
\newcommand{\mymod}[3]{#1 \equiv #2 \kern -0.5em \pmod{#3}}
\newcommand{\mynotmod}[3]{#1 \not \equiv #2 \kern -0.6em \pmod{#3}}
\begin{document}

%-------------------------------------------------------------------------
% editorial commands: to be inserted by the editorial office
%
%\firstpage{1} \volume{228} \Copyrightyear{2004} \DOI{003-0001}
%
%
%\seriesextra{Just an add-on}
%\seriesextraline{This is the Concrete Title of this Book\br H.E. R and S.T.C. W, Eds.}
%
% for journals:
%
%\firstpage{1}
%\issuenumber{1}
%\Volumeandyear{1 (2004)}
%\Copyrightyear{2004}
%\DOI{003-xxxx-y}
%\Signet
%\commby{inhouse}
%\submitted{March 14, 2003}
%\received{March 16, 2000}
%\revised{June 1, 2000}
%\accepted{July 22, 2000}
%
%
%
%---------------------------------------------------------------------------
%Insert here the title, affiliations and abstract:
%

\title[Unrestricted modified third-order Jacobsthal quaternions]
 {Unrestricted modified third-order Jacobsthal quaternions}

%----------Author 1
\author[G. Morales]{Gamaliel Morales}

\address{Instituto de Matem\'aticas,\\
Pontificia Universidad Cat\'olica de Valpara\'iso,\\
Blanco Viel 596,\\
Valpara\'iso, Chile.}
\email{gamaliel.cerda.m@mail.pucv.cl}

%----------classification, keywords, date
\subjclass{Primary 11B37, 16H05; Secondary 11R52.}

\keywords{Modified third-order Jacobsthal quaternions, Generating function, Binet's formula, Gaussian modified third-order Jacobsthal numbers, Catalan's identity.}

%%\date{January 1, 2004}
%----------additions
%\dedicatory{To my boss}
%%% ----------------------------------------------------------------------

\begin{abstract}
In this study, we introduce a new class of quaternions associated with the well-known modified third-order Jacobsthal numbers. There are many studies about the quaternions with special integer sequences and their generalizations. All of these studies used consecutive elements of the considered sequences. Here, we extend the usual definitions into a wider structure by using arbitrary modified third-order Jacobsthal numbers. Moreover, we present unrestricted complex modified third-order Jacobsthal numbers. In addition, we give some properties of this type of quaternions and complex modified third-order Jacobsthal numbers, including generating function and Binet-like formula.
\end{abstract}

%%% ----------------------------------------------------------------------
\maketitle
%%% ----------------------------------------------------------------------
%\tableofcontents
\section{Introduction}
In 1843, the Irish mathematician Sir W.R. Hamilton introduced quaternions which are members of a non-commutative division algebra as a kind of extension of complex numbers to higher spatial dimensions. Let $\mathbb{R}^{r,s}$ be the vector space with quadratic forms and $Cl_{r,s}(\mathbb{R})$ be the Clifford algebra on ${\mathbb{R}}^{r,s}$. This means that a standard basis $\{e_{i}\}$ has $r$ vectors square to $1$ and $s$ vectors square to $-1$. Quaternions can be constructed as the even subalgebra of the $Cl_{0,3}(\mathbb{R})$. For more details, the monograph \cite{Ham} can be investigated. 

A quaternion $q$ is formally defined as follows:
$$q=r_{0}+r_{1}\textbf{i}+r_{2}\textbf{j}+r_{3}\textbf{k},$$
where $r_{j}$ ($j=0,1,2,3$) are any real numbers, and the standard basis $\{1, \textbf{i}, \textbf{j}, \textbf{k}\}$ satisfies
$$\textbf{i}^{2}=\textbf{j}^{2}=\textbf{k}^{2}=-1,\  \textbf{i}\textbf{j}=-\textbf{j}\textbf{i}=\textbf{k},\ \textbf{j}\textbf{k}=-\textbf{k}\textbf{j}=\textbf{i},\  \textbf{k}\textbf{i}=-\textbf{i}\textbf{k}=\textbf{j}.$$
The conjugate of $q$ is $q^{*}=r_{0}-r_{1}\textbf{i}-r_{2}\textbf{j}-r_{3}\textbf{k}$, and its norm is 
\begin{equation}\label{norm}
Nr(q)=qq^{*}=r_{0}^{2}+r_{1}^{2}+r_{2}^{2}+r_{3}^{2}.  
\end{equation}

In \cite{Hor}, Horadam used another perspective to the above-mentioned subject by defining the Fibonacci and Lucas quaternions in the forms
$$QF_{n}=F_{n}+F_{n+1}\textbf{i}+F_{n+2}\textbf{j}+F_{n+3}\textbf{k}$$ and $$QL_{n}=L_{n}+L_{n+1}\textbf{i}+L_{n+2}\textbf{j}+L_{n+3}\textbf{k},$$ respectively, where $F_{n}$ is the famous $n$-th Fibonacci number and $L_{n}$ is the $n$-th Lucas number. Szynal-Liana and Wloch \cite{Szy} defined Jacobsthal and Jacobsthal-Lucas quaternions as
$$JQ_{n}=J_{n}+J_{n+1}\textbf{i}+J_{n+2}\textbf{j}+J_{n+3}\textbf{k}$$ and $$JLQ_{n}=j_{n}+j_{n+1}\textbf{i}+j_{n+2}\textbf{j}+j_{n+3}\textbf{k},$$ respectively, where $J_{n}$ and $j_{n}$ are $n$-th Jacobsthal and Jacobsthal-Lucas numbers, respectively. Dasdemir and Bilgici \cite{Das} investigated Binet's formulas and generating functions for Unrestricted Fibonacci and Lucas quaternions.There are many papers devoted to the quaternions whose coefficients are some special integer sequences, which are not mentioned here.

Based on these studies, we define a new class of quaternions and Gaussian modified third-order Jacobsthal numbers associated with the modified third-order Jacobsthal numbers. In addition, we give a Binet-like formula, generating functions, and certain identities such as Cassini's identity for these quaternions.

Before giving the main results, let us introduce these numbers briefly. Modified third-order Jacobsthal numbers are of the form  
\begin{equation}\label{mod}
K_{n}^{(3)}=2^{n}+\omega_{1}^{n}+\omega_{2}^{n}=2^{n}+M_{n},
\end{equation}
where 
\begin{equation}\label{mod1}
M_{n}=\left\{ 
\begin{array}{ccc}
2 & \textrm{if} & \mymod{n}{0}{3} \\ 
-1 & \textrm{if} & \mymod{n}{1,2}{3}
\end{array}
\right.
\end{equation}
and $\omega_{1}=\omega_{2}^{2}=\frac{-1+i\sqrt{3}}{2}$ ($i^{2}=-1$). 

In fact, this sequence appears when we study the third-order Jacobsthal numbers with indices from an arithmetic progression, for example,
$$J_{a(n+3)+r}^{(3)}=\left(2^{a}+\omega_{1}^{a}+\omega_{2}^{a}\right)J_{a(n+2)+r}^{(3)}-\left(2^{a}(\omega_{1}^{a}+\omega_{2}^{a})+1\right)J_{a(n+1)+r}^{(3)}+2^{a}J_{an+r}^{(3)},$$
for fixed integers $a$, $r$ with $0\leq r<a$. Note that $K_{n}^{(3)}=J_{n}^{(3)}+2J_{n-1}^{(3)}+6J_{n-2}^{(3)}$, where $J_{n}^{(3)}$ is the $n$-th third-order Jacobsthal number (see, e.g., \cite{Cer}).

In number theory, recall that a modified third-order Jacobsthal number of order $n$, is a number of
the form $2^{n}+\omega_{1}^{n}+\omega_{2}^{n}$, where $n$ is a nonnegative number. This identity is called as the Binet formula for modified third-order Jacobsthal sequence and it comes from the fact that the modified third-order Jacobsthal numbers can also be defined recursively by 
\begin{equation}\label{rec}
K_{n+3}^{(3)}=K_{n}^{(3)}+7\cdot2^{n},
\end{equation}
with initial conditions $K_{0}^{(3)}=3$, $K_{1}^{(3)}=1$ and $K_{2}^{(3)}=3$. Since this recurrence, substituting $n$ by $n+1$ and $n+2$, we obtain the new form
\begin{equation}\label{rec1}
K_{n+4}^{(3)}=K_{n+1}^{(3)}+7\cdot2^{n+1}
\end{equation}
and 
\begin{equation}\label{rec2}
K_{n+5}^{(3)}=K_{n+2}^{(3)}+7\cdot2^{n+2}.
\end{equation}
Subtracting (\ref{rec}) and (\ref{rec1}) to (\ref{rec2}), we have that $K_{n+5}^{(3)}-K_{n+4}^{(3)}-K_{n+3}^{(3)}=7\cdot2^{n}+K_{n+2}^{(3)}-K_{n+1}^{(3)}-K_{n}^{(3)}$ and then 
\begin{equation}\label{rec3}
K_{n+5}^{(3)}=K_{n+4}^{(3)}+K_{n+3}^{(3)}+2K_{n+2}^{(3)},
\end{equation}
other form for the recurrence relation of modified third-order Jacobsthal sequence. See \cite{Cer} for more details and properties of this type of numbers.

\section{Main results}
In this section, our definitions, some concepts, and the fundamental results are presented. First of all, we give the following definition.

\begin{defn}\label{d1}
 For an integer $n\geq 0$, the unrestricted modified third-order Jacobsthal quaternion $\mathcal{QK}_{n}^{(a,b,c)}$ is defined by $$\mathcal{QK}_{n}^{(a,b,c)}=K_{n}^{(3)}+K_{n+a}^{(3)}\textbf{i}+K_{n+b}^{(3)}\textbf{j}+K_{n+c}^{(3)}\textbf{k},$$  where $a$, $b$ and $c$ are any integers.
\end{defn}

According to our definition, we have the following special cases:
\begin{itemize}
    \item For $a=b=c=-n$, the usual modified third-order numbers are obtained $\mathcal{QK}_{n}^{(-n,-n,-n)}=K_{n}^{(3)}$.
    \item For $b=c=-n$, the unrestricted complex modified third-order Jacobsthal numbers are obtained $\mathcal{QK}_{n}^{(a,-n,-n)}=K_{n}^{(3)}+K_{n+a}^{(3)}\textbf{i}$.
    \item For $a=1$ and $b=c=-n$, the complex modified third-order Jacobsthal numbers are obtained $\mathcal{QK}_{n}^{(1,-n,-n)}=K_{n}^{(3)}+K_{n+1}^{(3)}\textbf{i}$.
    \item For $a=1$, $b=2$ and $c=3$, the modified third-order Jacobsthal quaternions in the usual form of quaternions are obtained: $$\mathcal{QK}_{n}^{(1,2,3)}=K_{n}^{(3)}+K_{n+1}^{(3)}\textbf{i}+K_{n+2}^{(3)}\textbf{j}+K_{n+3}^{(3)}\textbf{k}.$$
\end{itemize}

In \cite{Hor}, Horadam introduced the complex Fibonacci and Lucas numbers. These were named Gaussian Fibonacci and Lucas numbers by Berzsenyi \cite{Ber} and they have been generally referred to this name since that day. So we prefer to use it for the second and third sequences. It should be noted that all the results obtained for the generalized modified third-order Jacobsthal quaternions can be reduced to these numbers. To reduce the volume of the current paper, we only take the unrestricted modified third-order Jacobsthal quaternions into account.

\begin{thm} 
For any non-negative integer $n$, we have 
 \begin{equation}\label{t1}
  \mathcal{QK}_{n+3}^{(a,b,c)}=\mathcal{QK}_{n}^{(a,b,c)}+7\cdot 2^{n}\Theta
  \end{equation}
  and
  \begin{equation}\label{t2}
\mathcal{QK}_{n+3}^{(a,b,c)}=\mathcal{QK}_{n+2}^{(a,b,c)}+\mathcal{QK}_{n+1}^{(a,b,c)}+2\mathcal{QK}_{n}^{(a,b,c)},
  \end{equation}
where $\Theta=1+2^{a}\textbf{i}+2^{b}\textbf{j}+2^{c}\textbf{k}$.
\end{thm}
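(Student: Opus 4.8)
The plan is to reduce both identities for the quaternions to the corresponding scalar identities (\ref{rec}) and (\ref{rec3}) for the modified third-order Jacobsthal numbers $K_n^{(3)}$, which are already established in the introduction. The key observation is that $\mathcal{QK}_n^{(a,b,c)}$ is, by Definition~\ref{d1}, a linear combination of the four scalar sequences $K_n^{(3)}, K_{n+a}^{(3)}, K_{n+b}^{(3)}, K_{n+c}^{(3)}$ with the fixed quaternion ``coefficients'' $1, \textbf{i}, \textbf{j}, \textbf{k}$, and each of these shifted sequences satisfies the very same recurrences as $K_n^{(3)}$ (a recurrence in $n$ is preserved under any index shift).

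For (\ref{t1}), I would write out $\mathcal{QK}_{n+3}^{(a,b,c)} = K_{n+3}^{(3)} + K_{n+3+a}^{(3)}\textbf{i} + K_{n+3+b}^{(3)}\textbf{j} + K_{n+3+c}^{(3)}\textbf{k}$ and apply (\ref{rec}) to each of the four entries, namely $K_{n+3}^{(3)} = K_n^{(3)} + 7\cdot 2^n$, $K_{n+3+a}^{(3)} = K_{n+a}^{(3)} + 7\cdot 2^{n+a}$, and similarly for the $b$ and $c$ shifts. Collecting the ``$K$'' parts gives exactly $\mathcal{QK}_n^{(a,b,c)}$, while the remainder is $7\cdot 2^n + 7\cdot 2^{n+a}\textbf{i} + 7\cdot 2^{n+b}\textbf{j} + 7\cdot 2^{n+c}\textbf{k} = 7\cdot 2^n\bigl(1 + 2^a\textbf{i} + 2^b\textbf{j} + 2^c\textbf{k}\bigr) = 7\cdot 2^n\Theta$, which is the claimed right-hand side. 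Here one uses $2^{n+a} = 2^n 2^a$ to factor out $2^n$; no noncommutativity issue arises because $2^n$ is a real scalar.

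For (\ref{t2}), I would proceed identically using the recurrence (\ref{rec3}) in the shifted form $K_{m+5}^{(3)} = K_{m+4}^{(3)} + K_{m+3}^{(3)} + 2K_{m+2}^{(3)}$, i.e. $K_{n+3}^{(3)} = K_{n+2}^{(3)} + K_{n+1}^{(3)} + 2K_n^{(3)}$ (setting $m = n-2$, valid for the indices appearing once one also allows the shifts by $a,b,c$, or alternatively by reading (\ref{rec3}) directly after re-indexing). Applying this to each of the four components $K_{n+3}^{(3)}, K_{n+3+a}^{(3)}, K_{n+3+b}^{(3)}, K_{n+3+c}^{(3)}$ and then regrouping the terms according to their shift ($0$, $a$, $b$, $c$) reassembles $\mathcal{QK}_{n+2}^{(a,b,c)} + \mathcal{QK}_{n+1}^{(a,b,c)} + 2\mathcal{QK}_n^{(a,b,c)}$ on the nose. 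Since addition of quaternions is componentwise and the basis elements $1,\textbf{i},\textbf{j},\textbf{k}$ are merely carried along as tags, the regrouping is automatic.

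There is essentially no obstacle here: the only point requiring a word of care is the range of validity of the scalar recurrences when the shift parameters $a,b,c$ are negative, so that an index such as $n+a$ could be negative. One should either restrict to the range where (\ref{rec})--(\ref{rec3}) are stated, or note that the sequence $K_m^{(3)}$ (and hence its Binet expression $2^m + \omega_1^m + \omega_2^m$) extends to all integers $m$ with the same recurrences holding identically, so the computation is unaffected. I would include a single sentence to this effect and otherwise present the two displays of regrouping as above.
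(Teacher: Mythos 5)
Your proposal is correct and matches the paper's own argument: the paper likewise applies the scalar recurrence (\ref{rec}) componentwise to $K_{n+3}^{(3)}, K_{n+3+a}^{(3)}, K_{n+3+b}^{(3)}, K_{n+3+c}^{(3)}$, collects the remainder into $7\cdot 2^{n}\Theta$, and notes that (\ref{t2}) follows in the same way from (\ref{rec3}). Your added remark on extending $K_{m}^{(3)}$ to negative indices is a small point of care the paper omits but does not change the argument.
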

\begin{proof}
Considering the Definition \ref{d1} and Eq. (\ref{rec}), we get
\begin{align*}
\mathcal{QK}_{n}^{(a,b,c)}&=K_{n}^{(3)}+K_{n+a}^{(3)}\textbf{i}+K_{n+b}^{(3)}\textbf{j}+K_{n+c}^{(3)}\textbf{k}\\
&\ \ + \left(K_{n+3}^{(3)}-7\cdot 2^{n}\right)+\left(K_{n+a+3}^{(3)}-7\cdot 2^{n+a}\right)\textbf{i}+\left(K_{n+b+3}^{(3)}-7\cdot 2^{n+b}\right)\textbf{j}\\
&\ \ +\left(K_{n+c+3}^{(3)}-7\cdot 2^{n+c}\right)\textbf{k}\\
&\ \ =\mathcal{QK}_{n+3}^{(a,b,c)}-7\cdot 2^{n}\Theta.
\end{align*} 
The last equation gives Eq. (\ref{t1}). Using the Definition \ref{d1} and Eq. (\ref{rec3}), Eq. (\ref{t2}) can be proved similarly.
\end{proof}

A Binet-like formula for the unrestricted modified third-order Jacobsthal quaternions is given in the following theorem.
\begin{thm}[Binet-like formula]\label{bin}
For any nonnegative integer $n$, the $n$-th unrestricted modified third-order Jacobsthal quaternion is
\begin{align*}
\mathcal{QK}_{n}^{(a,b,c)}&=2^{n}\Theta +\mathcal{QM}_{n}^{(a,b,c)}\\
\mathcal{QM}_{n}^{(a,b,c)}&=M_{n}+M_{n+a}\textbf{i}+M_{n+b}\textbf{j}+M_{n+c}\textbf{k}=\omega_{1}^{n}\Phi_{1}+\omega_{2}^{n}\Phi_{2},
\end{align*}
where $\Phi_{1}=1+\omega_{1}^{a}\textbf{i}+\omega_{1}^{b}\textbf{j}+\omega_{1}^{c}\textbf{k}$ and $\Phi_{2}=1+\omega_{2}^{a}\textbf{i}+\omega_{2}^{b}\textbf{j}+\omega_{2}^{c}\textbf{k}$.
\end{thm}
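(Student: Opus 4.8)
The plan is to establish the two displayed identities separately, in both cases reducing the quaternion statement to the scalar Binet formula (\ref{mod}) for the modified third-order Jacobsthal numbers. First I would prove the decomposition $\mathcal{QK}_{n}^{(a,b,c)}=2^{n}\Theta+\mathcal{QM}_{n}^{(a,b,c)}$. By Definition \ref{d1}, $\mathcal{QK}_{n}^{(a,b,c)}=K_{n}^{(3)}+K_{n+a}^{(3)}\textbf{i}+K_{n+b}^{(3)}\textbf{j}+K_{n+c}^{(3)}\textbf{k}$, and substituting the scalar identity $K_{m}^{(3)}=2^{m}+M_{m}$ from (\ref{mod}) into each of the four coordinates gives
\begin{align*}
\mathcal{QK}_{n}^{(a,b,c)}&=\bigl(2^{n}+M_{n}\bigr)+\bigl(2^{n+a}+M_{n+a}\bigr)\textbf{i}+\bigl(2^{n+b}+M_{n+b}\bigr)\textbf{j}+\bigl(2^{n+c}+M_{n+c}\bigr)\textbf{k}\\
&=2^{n}\bigl(1+2^{a}\textbf{i}+2^{b}\textbf{j}+2^{c}\textbf{k}\bigr)+\bigl(M_{n}+M_{n+a}\textbf{i}+M_{n+b}\textbf{j}+M_{n+c}\textbf{k}\bigr),
\end{align*}
which is exactly $2^{n}\Theta+\mathcal{QM}_{n}^{(a,b,c)}$ by the definitions of $\Theta$ and $\mathcal{QM}_{n}^{(a,b,c)}$.

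Next I would prove the formula $\mathcal{QM}_{n}^{(a,b,c)}=\omega_{1}^{n}\Phi_{1}+\omega_{2}^{n}\Phi_{2}$. The key observation is that the scalar $M_{n}$ admits the Binet-type expression $M_{n}=\omega_{1}^{n}+\omega_{2}^{n}$; indeed this is immediate from (\ref{mod}) since $K_{n}^{(3)}=2^{n}+\omega_{1}^{n}+\omega_{2}^{n}$ and $K_{n}^{(3)}=2^{n}+M_{n}$, so $M_{n}=\omega_{1}^{n}+\omega_{2}^{n}$ (one may alternatively verify directly that $\omega_{1}^{n}+\omega_{2}^{n}$ equals $2$ when $\mymod{n}{0}{3}$ and $-1$ otherwise, using $\omega_{1}^{3}=\omega_{2}^{3}=1$ and $\omega_{1}+\omega_{2}=-1$). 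Applying $M_{m}=\omega_{1}^{m}+\omega_{2}^{m}$ to each coordinate of $\mathcal{QM}_{n}^{(a,b,c)}$ yields
\begin{align*}
\mathcal{QM}_{n}^{(a,b,c)}&=\bigl(\omega_{1}^{n}+\omega_{2}^{n}\bigr)+\bigl(\omega_{1}^{n+a}+\omega_{2}^{n+a}\bigr)\textbf{i}+\bigl(\omega_{1}^{n+b}+\omega_{2}^{n+b}\bigr)\textbf{j}+\bigl(\omega_{1}^{n+c}+\omega_{2}^{n+c}\bigr)\textbf{k}\\
&=\omega_{1}^{n}\bigl(1+\omega_{1}^{a}\textbf{i}+\omega_{1}^{b}\textbf{j}+\omega_{1}^{c}\textbf{k}\bigr)+\omega_{2}^{n}\bigl(1+\omega_{2}^{a}\textbf{i}+\omega_{2}^{b}\textbf{j}+\omega_{2}^{c}\textbf{k}\bigr)\\
&=\omega_{1}^{n}\Phi_{1}+\omega_{2}^{n}\Phi_{2},
\end{align*}
where in the regrouping I factor $\omega_{1}^{n}$ out of the first summand in each coordinate and $\omega_{2}^{n}$ out of the second, which is legitimate because $\omega_{1},\omega_{2}$ are complex scalars and hence commute with the quaternion units $\textbf{i},\textbf{j},\textbf{k}$.

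Since neither step involves any genuine difficulty — both are coordinatewise substitutions of the scalar Binet formula followed by regrouping — there is no real obstacle; the only point requiring a word of care is the commutativity of the complex coefficients $\omega_{1},\omega_{2}$ (and the real powers of $2$) with the quaternion basis, which justifies pulling the scalar factors $2^{n}$, $\omega_{1}^{n}$, $\omega_{2}^{n}$ across the units $\textbf{i},\textbf{j},\textbf{k}$. I would also remark that the identity $M_{n}=\omega_{1}^{n}+\omega_{2}^{n}$ should be recorded explicitly (it is implicit in (\ref{mod})) since it is the linchpin of the second part.
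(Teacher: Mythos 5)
Your proposal is correct and follows essentially the same route as the paper's own proof: substitute $K_{m}^{(3)}=2^{m}+M_{m}$ coordinatewise, factor out $2^{n}$ to get $\Theta$, and then use $M_{m}=\omega_{1}^{m}+\omega_{2}^{m}$ to factor $\mathcal{QM}_{n}^{(a,b,c)}$ as $\omega_{1}^{n}\Phi_{1}+\omega_{2}^{n}\Phi_{2}$. Your version is slightly more careful in that it records the identity $M_{n}=\omega_{1}^{n}+\omega_{2}^{n}$ explicitly, which the paper leaves implicit.
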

\begin{proof}
Using Eqs. (\ref{mod}), (\ref{mod1}) and Definition \ref{d1}, we can write
\begin{align*}
\mathcal{QK}_{n}^{(a,b,c)}&=K_{n}^{(3)}+K_{n+a}^{(3)}\textbf{i}+K_{n+b}^{(3)}\textbf{j}+K_{n+c}^{(3)}\textbf{k}\\
&=\left(2^{n}+M_{n}\right)+\left(2^{n+a}+M_{n+a}\right)\textbf{i}\\
&\ \ + \left(2^{n+b}+M_{n+b}\right)\textbf{j}+\left(2^{n+c}+M_{n+c}\right)\textbf{k}\\
&=2^{n}\left( 1+2^{a}\textbf{i}+2^{b}\textbf{j}+2^{c}\textbf{k}\right)+\mathcal{QM}_{n}^{(a,b,c)}\\
&=2^{n}\Theta +\mathcal{QM}_{n}^{(a,b,c)}
\end{align*}
and 
\begin{align*}
\mathcal{QM}_{n}^{(a,b,c)}=\omega_{1}^{n}\left(1+\omega_{1}^{a}\textbf{i}+\omega_{1}^{b}\textbf{j}+\omega_{1}^{c}\textbf{k}\right)+\omega_{2}^{n}\left(1+\omega_{2}^{a}\textbf{i}+\omega_{2}^{b}\textbf{j}+\omega_{2}^{c}\textbf{k}\right).
\end{align*}
The last equation gives the theorem.
\end{proof}

The following lemma gives the unrestricted modified third-order Jacobsthal quaternions with negative indices.
\begin{thm}
For any integer $n$, we have
\begin{align*}
\mathcal{QK}_{-n}^{(a,b,c)}&=2^{-n}\Theta +M_{n}\left( 1-X_{a-1}\textbf{i}-X_{b-1}\textbf{j}-X_{c-1}\textbf{k}\right)\\
&\ \ + M_{n-1}\left(X_{a}\textbf{i}+X_{b}\textbf{j}+X_{c}\textbf{k}\right),
\end{align*}
where $\Theta$ is as Theorem \ref{t1} and $X_{n}=\frac{\omega_{1}^{n}-\omega_{2}^{n}}{\omega_{1}-\omega_{2}}$.
\end{thm}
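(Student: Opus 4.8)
The plan is to read the result off from the Binet-like formula of Theorem~\ref{bin}. That formula, $\mathcal{QK}_{m}^{(a,b,c)}=2^{m}\Theta+\mathcal{QM}_{m}^{(a,b,c)}$ with $\mathcal{QM}_{m}^{(a,b,c)}=M_{m}+M_{m+a}\textbf{i}+M_{m+b}\textbf{j}+M_{m+c}\textbf{k}$, stays valid for every integer $m$, since the recurrences defining $K_{m}^{(3)}$ (equivalently, the period-$3$ sequence $M_{m}$ together with $2^{m}$) extend to negative indices. Taking $m=-n$ gives
$$\mathcal{QK}_{-n}^{(a,b,c)}=2^{-n}\Theta+M_{-n}+M_{a-n}\textbf{i}+M_{b-n}\textbf{j}+M_{c-n}\textbf{k},$$
so it suffices to rewrite each component $M_{m-n}$, $m\in\{0,a,b,c\}$, in terms of $M_{n}$, $M_{n-1}$ and the Jacobsthal-type numbers $X_{m}$, $X_{m-1}$; collecting the resulting $M_{n}$- and $M_{n-1}$-coefficients then reproduces the stated identity.

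The heart of the argument is the identity
$$M_{m-n}=M_{n-1}X_{m}-M_{n}X_{m-1}\qquad (m,n\in\mathbb{Z}).$$
I would prove it by inserting the closed forms $M_{k}=\omega_{1}^{k}+\omega_{2}^{k}$ and $X_{k}=\frac{\omega_{1}^{k}-\omega_{2}^{k}}{\omega_{1}-\omega_{2}}$ into the right-hand side and expanding the two products: the terms $\pm\omega_{1}^{m+n-1}$ cancel, and the four surviving terms factor as $(\omega_{1}-\omega_{2})\bigl(\omega_{1}^{n-1}\omega_{2}^{m-1}+\omega_{2}^{n-1}\omega_{1}^{m-1}\bigr)$. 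Here the only structural input is needed, namely $\omega_{1}\omega_{2}=1$ (because $\omega_{1}$ and $\omega_{2}=\omega_{1}^{2}$ are the primitive cube roots of unity), so that $\omega_{1}^{-1}=\omega_{2}$ and $\omega_{2}^{-1}=\omega_{1}$; this collapses $\omega_{1}^{n-1}\omega_{2}^{m-1}+\omega_{2}^{n-1}\omega_{1}^{m-1}$ to $\omega_{1}^{n-m}+\omega_{2}^{n-m}=M_{n-m}$, and $M_{n-m}=M_{m-n}$ because $M_{-k}=M_{k}$. Dividing by $\omega_{1}-\omega_{2}\neq0$ finishes the identity.

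To assemble the pieces, take $m=a,b,c$ in the identity to obtain the $\textbf{i}$-, $\textbf{j}$- and $\textbf{k}$-components, and $m=0$ to obtain the scalar part $M_{-n}=M_{n-1}X_{0}-M_{n}X_{-1}=M_{n}$, using $X_{0}=0$ and $X_{-1}=\frac{\omega_{2}-\omega_{1}}{\omega_{1}-\omega_{2}}=-1$. Substituting into the displayed expression for $\mathcal{QK}_{-n}^{(a,b,c)}$ and grouping gives
$$\mathcal{QK}_{-n}^{(a,b,c)}=2^{-n}\Theta+M_{n}\bigl(1-X_{a-1}\textbf{i}-X_{b-1}\textbf{j}-X_{c-1}\textbf{k}\bigr)+M_{n-1}\bigl(X_{a}\textbf{i}+X_{b}\textbf{j}+X_{c}\textbf{k}\bigr),$$
which is the theorem. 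I do not expect a serious obstacle; the only point requiring care is the sign and index bookkeeping in the key identity---in particular that $X_{-1}=-1$ rather than $1$---together with the observation that every reduction of a negative power of $\omega_{1}$ or $\omega_{2}$ silently rests on $\omega_{1}\omega_{2}=1$, which is exactly what makes those powers fall back onto the sequences $M$ and $X$.
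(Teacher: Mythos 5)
Your proof is correct and follows essentially the same route as the paper's: both decompose $\mathcal{QK}_{-n}^{(a,b,c)}$ via the Binet formula into $2^{-n}\Theta+\mathcal{QM}_{-n}^{(a,b,c)}$ and then convert each component $M_{m-n}$ using a shift identity linking $M$ and $X$ (the paper invokes $M_{-n}=M_{n}$ and $M_{n+m}=X_{m+1}M_{n}-X_{m}M_{n-1}$, which after the sign flip $X_{-m}=-X_{m}$ is exactly your identity $M_{m-n}=M_{n-1}X_{m}-M_{n}X_{m-1}$). The only difference is that you actually verify the key identity from the closed forms and $\omega_{1}\omega_{2}=1$, whereas the paper cites it without proof.
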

\begin{proof}
From Eqs. (\ref{mod}) and (\ref{mod1}), we obtain
\begin{align*}
\mathcal{QK}_{-n}^{(a,b,c)}&=K_{-n}^{(3)}+K_{-n+a}^{(3)}\textbf{i}+K_{-n+b}^{(3)}\textbf{j}+K_{-n+c}^{(3)}\textbf{k}\\
&=\left(2^{-n}+M_{-n}\right)+\left(2^{-n+a}+M_{-(n-a)}\right)\textbf{i}\\
&\ \ + \left(2^{-n+b}+M_{-(n-b)}\right)\textbf{j}+\left(2^{-n+c}+M_{-(n-c)}\right)\textbf{k}\\
&=2^{-n}\left( 1+2^{a}\textbf{i}+2^{b}\textbf{j}+2^{c}\textbf{k}\right)+\mathcal{QM}_{-n}^{(a,b,c)}\\
&=2^{-n}\Theta +\mathcal{QM}_{-n}^{(a,b,c)}.
\end{align*}
Using the identities $M_{-n}=M_{n}$ and $M_{n+m}=X_{m+1}M_{n}-X_{m}M_{n-1}$, we have
\begin{align*}
\mathcal{QM}_{-n}^{(a,b,c)}&=M_{-n}+M_{-(n-a)}\textbf{i}+M_{-(n-b)}\textbf{j}+M_{-(n-c)}\textbf{k}\\
&=M_{n}+M_{n-a}\textbf{i}+M_{n-b}\textbf{j}+M_{n-c}\textbf{k}\\
&=M_{n}\left( 1-X_{a-1}\textbf{i}-X_{b-1}\textbf{j}-X_{c-1}\textbf{k}\right)+M_{n-1}\left(X_{a}\textbf{i}+X_{b}\textbf{j}+X_{c}\textbf{k}\right),
\end{align*}
where $X_{n}$ is the sequence defined by $X_{n}=\frac{\omega_{1}^{n}-\omega_{2}^{n}}{\omega_{1}-\omega_{2}}$. The result follows from the last equation.
\end{proof}

The following theorem gives the norm of a unrestricted modified third-order Jacobsthal quaternion.
\begin{thm}
For any integer $n$, we have
$$Nr\left(\mathcal{QK}_{n}^{(a,b,c)}\right)=\left\lbrace \begin{array}{cc} 
2^{2n}Nr\left(\Theta\right)+2^{n+1}M_{n}\left(1+2^{a}X_{a+1}+2^{b}X_{b+1}+2^{c}X_{c+1}\right)\\
- 2^{n+1}M_{n-1}\left(1+2^{a}X_{a}+2^{b}X_{b}+2^{c}X_{c}\right)\\
+ M_{2n}+M_{2(n+a)}+M_{2(n+b)}+M_{2(n+c)}+8 \end{array} \right\rbrace,$$
where $X_{n}=\frac{\omega_{1}^{n}-\omega_{2}^{n}}{\omega_{1}-\omega_{2}}$.
\end{thm}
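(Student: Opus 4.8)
The plan is to start from the norm formula $Nr(q) = q q^{*}$ applied to $q = \mathcal{QK}_{n}^{(a,b,c)}$. By definition this norm equals the sum of squares of the four real components, namely
$$Nr\left(\mathcal{QK}_{n}^{(a,b,c)}\right) = \left(K_{n}^{(3)}\right)^{2} + \left(K_{n+a}^{(3)}\right)^{2} + \left(K_{n+b}^{(3)}\right)^{2} + \left(K_{n+c}^{(3)}\right)^{2}.$$
Then I would substitute the decomposition $K_{m}^{(3)} = 2^{m} + M_{m}$ from \eqref{mod} into each square, obtaining $\left(K_{m}^{(3)}\right)^{2} = 2^{2m} + 2^{m+1}M_{m} + M_{m}^{2}$ for each of $m \in \{n, n+a, n+b, n+c\}$.

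Next I would organize the resulting twelve terms into three groups. The pure powers-of-two terms $2^{2n} + 2^{2(n+a)} + 2^{2(n+b)} + 2^{2(n+c)} = 2^{2n}\left(1 + 2^{2a} + 2^{2b} + 2^{2c}\right) = 2^{2n} Nr(\Theta)$, using the definition $\Theta = 1 + 2^{a}\textbf{i} + 2^{b}\textbf{j} + 2^{c}\textbf{k}$ and the norm formula \eqref{norm}. The cross terms give $2^{n+1}\left(M_{n} + 2^{a}M_{n+a} + 2^{b}M_{n+b} + 2^{c}M_{n+c}\right)$. For the quadratic-in-$M$ terms, I would use the identity $M_{m}^{2} = M_{2m} + 2$ (which follows from \eqref{mod1}, since $M_{m} \in \{2, -1\}$ gives $M_{m}^{2} \in \{4, 1\}$ and $M_{2m} \in \{2, -1\}$ with $2m \equiv 0 \pmod 3 \iff m \equiv 0 \pmod 3$); summing over the four indices produces $M_{2n} + M_{2(n+a)} + M_{2(n+b)} + M_{2(n+c)} + 8$, which is exactly the last line of the claimed formula.

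It then remains to rewrite the cross-term sum $2^{n+1}\left(M_{n} + 2^{a}M_{n+a} + 2^{b}M_{n+b} + 2^{c}M_{n+c}\right)$ in the stated form. For this I would invoke the shift identity $M_{n+m} = X_{m+1}M_{n} - X_{m}M_{n-1}$ (used in the previous theorem's proof) applied with $m \in \{a, b, c\}$, together with $X_{1} = 1$ and $X_{0} = 0$ so that the $M_{n}$ term itself fits the pattern. Substituting gives
$$M_{n} + \sum_{m \in \{a,b,c\}} 2^{m}\left(X_{m+1}M_{n} - X_{m}M_{n-1}\right) = M_{n}\left(1 + 2^{a}X_{a+1} + 2^{b}X_{b+1} + 2^{c}X_{c+1}\right) - M_{n-1}\left(2^{a}X_{a} + 2^{b}X_{b} + 2^{c}X_{c}\right).$$
Multiplying by $2^{n+1}$ matches the middle two lines of the theorem — note the coefficient $1$ inside the $M_{n-1}$ bracket of the statement appears spurious unless one also expands the bare $M_{n}$ via the same identity with an extra convention, so a careful reconciliation of that constant term is the one genuine checkpoint. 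The main obstacle is thus not any deep computation but rather bookkeeping: correctly invoking $M_{m}^{2} = M_{2m}+2$, the shift identity with the right edge cases $X_{0}=0$, $X_{1}=1$, and making sure the $M_{n-1}$ coefficient in the final display is reproduced exactly as written.
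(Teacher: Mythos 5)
Your proposal follows exactly the same route as the paper's proof: expand $Nr$ as the sum of the four squares, substitute $K_{m}^{(3)}=2^{m}+M_{m}$, collect the pure power terms into $2^{2n}Nr(\Theta)$, use $M_{m}^{2}=M_{2m}+2$ for the last line, and apply the shift identity $M_{n+m}=X_{m+1}M_{n}-X_{m}M_{n-1}$ to the cross terms. The checkpoint you flag is real: with $X_{0}=0$, $X_{1}=1$ the cross terms give $-2^{n+1}M_{n-1}\left(2^{a}X_{a}+2^{b}X_{b}+2^{c}X_{c}\right)$ with \emph{no} constant $1$ in the bracket, whereas the theorem (and the paper's proof, which simply writes the final display without justifying that step) inserts a $1$, making the stated formula too small by $2^{n+1}M_{n-1}$. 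A numerical check confirms this: for $n=1$, $(a,b,c)=(1,2,3)$ one has $Nr\left(\mathcal{QK}_{1}^{(1,2,3)}\right)=1^{2}+3^{2}+10^{2}+15^{2}=335$, while the stated formula yields $327$; dropping the spurious $1$ restores agreement. So your derivation is the correct one, and the reconciliation you were unsure about should be resolved by deleting that $1$ from the statement rather than by expanding the bare $M_{n}$ under some extra convention.
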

\begin{proof}
From Eqs. (\ref{norm}), (\ref{mod}) and (\ref{mod1}), we have
\begin{align*}
Nr\left(\mathcal{QK}_{n}^{(a,b,c)}\right)&=\left(K_{n}^{(3)}\right)^{2}+\left(K_{n+a}^{(3)}\right)^{2}+\left(K_{n+b}^{(3)}\right)^{2}+\left(K_{n+c}^{(3)}\right)^{2}\\
&=\left(2^{n}+M_{n}\right)^{2}+\left(2^{n+a}+M_{n+a}\right)^{2}\\
&\ \ + \left(2^{n+b}+M_{n+b}\right)^{2}+\left(2^{n+c}+M_{n+c}\right)^{2}\\
&=2^{2n}\left(1+2^{2a}+2^{2b}+2^{2c}\right)\\
&\ \ + 2^{n+1}\left(M_{n}+2^{a}M_{n+a}+2^{b}M_{n+b}+2^{c}M_{n+c}\right)\\
&\ \ + M_{n}^{2}+M_{n+a}^{2}+M_{n+b}^{2}+M_{n+c}^{2}.
\end{align*}
Using the identities $M_{n}^{2}=M_{2n}+2$ and $M_{n+m}=X_{m+1}M_{n}-X_{m}M_{n-1}$, we have
\begin{align*}
Nr\left(\mathcal{QK}_{n}^{(a,b,c)}\right)&=2^{2n}Nr\left(\Theta\right)+2^{n+1}M_{n}\left(1+2^{a}X_{a+1}+2^{b}X_{b+1}+2^{c}X_{c+1}\right)\\
&\ \ - 2^{n+1}M_{n-1}\left(1+2^{a}X_{a}+2^{b}X_{b}+2^{c}X_{c}\right)\\
&\ \ + M_{2n}+M_{2(n+a)}+M_{2(n+b)}+M_{2(n+c)}+8,
\end{align*}
where $X_{n}=\frac{\omega_{1}^{n}-\omega_{2}^{n}}{\omega_{1}-\omega_{2}}$ and this equation gives the theorem.
\end{proof}

Now we give some summation formulas for the unrestricted modified third-order Jacobsthal quaternions.
\begin{thm}
For any nonnegative integer $n$, we have
$$\sum_{j=0}^{n}\mathcal{QK}_{j}^{(a,b,c)}=\frac{1}{3}\left\lbrace \begin{array}{cc} \mathcal{QK}_{n+2}^{(a,b,c)}+2\mathcal{QK}_{n}^{(a,b,c)}\\
-3\Theta +(1-\omega_{2})\Phi_{1}+(1-\omega_{1})\Phi_{2}  \end{array} \right\rbrace.$$
\end{thm}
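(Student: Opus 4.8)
The plan is to bypass any geometric-series computation and instead exploit the linear recurrence \eqref{t2} together with an index-shifting (telescoping) argument; this reduces the whole statement to the evaluation of a single constant, which is then read off from the Binet-like formula of Theorem \ref{bin}.

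First I would put $S_{n}=\sum_{j=0}^{n}\mathcal{QK}_{j}^{(a,b,c)}$ and rewrite \eqref{t2} in the form $2\mathcal{QK}_{j}^{(a,b,c)}=\mathcal{QK}_{j+3}^{(a,b,c)}-\mathcal{QK}_{j+2}^{(a,b,c)}-\mathcal{QK}_{j+1}^{(a,b,c)}$. Summing over $j=0,\dots,n$ gives $2S_{n}=\sum_{j=3}^{n+3}\mathcal{QK}_{j}^{(a,b,c)}-\sum_{j=2}^{n+2}\mathcal{QK}_{j}^{(a,b,c)}-\sum_{j=1}^{n+1}\mathcal{QK}_{j}^{(a,b,c)}$, and each shifted sum can be written through $S_{n}$ and the boundary terms $\mathcal{QK}_{0}^{(a,b,c)},\mathcal{QK}_{1}^{(a,b,c)},\mathcal{QK}_{2}^{(a,b,c)},\mathcal{QK}_{n+1}^{(a,b,c)},\mathcal{QK}_{n+2}^{(a,b,c)},\mathcal{QK}_{n+3}^{(a,b,c)}$. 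Collecting terms, the $S_{n}$-contributions on the right collapse to $-S_{n}$, so $3S_{n}=\mathcal{QK}_{n+3}^{(a,b,c)}-\mathcal{QK}_{n+1}^{(a,b,c)}+\mathcal{QK}_{0}^{(a,b,c)}-\mathcal{QK}_{2}^{(a,b,c)}$. Applying \eqref{t2} once more converts $\mathcal{QK}_{n+3}^{(a,b,c)}-\mathcal{QK}_{n+1}^{(a,b,c)}$ into $\mathcal{QK}_{n+2}^{(a,b,c)}+2\mathcal{QK}_{n}^{(a,b,c)}$, which is precisely the $n$-dependent part of the claimed identity.

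What remains is to check that $\mathcal{QK}_{0}^{(a,b,c)}-\mathcal{QK}_{2}^{(a,b,c)}=-3\Theta+(1-\omega_{2})\Phi_{1}+(1-\omega_{1})\Phi_{2}$. Here I would invoke Theorem \ref{bin} in the form $\mathcal{QK}_{m}^{(a,b,c)}=2^{m}\Theta+\omega_{1}^{m}\Phi_{1}+\omega_{2}^{m}\Phi_{2}$, so that $\mathcal{QK}_{0}^{(a,b,c)}=\Theta+\Phi_{1}+\Phi_{2}$ and $\mathcal{QK}_{2}^{(a,b,c)}=4\Theta+\omega_{1}^{2}\Phi_{1}+\omega_{2}^{2}\Phi_{2}$; subtracting and using $\omega_{1}^{2}=\omega_{2}$ and $\omega_{2}^{2}=\omega_{1}$ (immediate from $\omega_{1}=\omega_{2}^{2}$ together with $\omega_{1}^{3}=\omega_{2}^{3}=1$) produces exactly that constant. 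Dividing by $3$ yields the theorem.

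The argument is essentially bookkeeping; the only points that require care are the boundary terms generated when the three shifted sums are re-expressed through $S_{n}$ — it is easy to lose or double-count one of $\mathcal{QK}_{0}^{(a,b,c)},\mathcal{QK}_{2}^{(a,b,c)}$ — and the small cube-root-of-unity identity $\omega_{1}^{2}=\omega_{2}$ at the end. A fully parallel alternative is to start directly from Theorem \ref{bin} and sum three geometric series, $\sum_{j=0}^{n}2^{j}=2^{n+1}-1$ and $\sum_{j=0}^{n}\omega_{k}^{j}=\dfrac{\omega_{k}^{\,n+1}-1}{\omega_{k}-1}$ for $k=1,2$; in that route the main obstacle is instead simplifying the factors $1/(\omega_{k}-1)$ using $\omega_{1}+\omega_{2}=-1$ and $\omega_{1}\omega_{2}=1$, a brief computation that returns the same closed form.
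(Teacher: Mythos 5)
Your proposal is correct, and its main route is genuinely different from the paper's. The paper works entirely from the Binet-like formula of Theorem~\ref{bin}: it splits the sum into $\Theta\sum 2^{j}+\Phi_{1}\sum\omega_{1}^{j}+\Phi_{2}\sum\omega_{2}^{j}$, evaluates the three geometric series, and then clears the denominators $\omega_{k}-1$ via $(\omega_{1}-1)(\omega_{2}-1)=\omega_{1}\omega_{2}-(\omega_{1}+\omega_{2})+1=3$, which is where the overall factor $\tfrac{1}{3}$ comes from; it ends at exactly the expression $\tfrac13\{\mathcal{QK}_{n+2}^{(a,b,c)}+2\mathcal{QK}_{n}^{(a,b,c)}+\mathcal{QK}_{0}^{(a,b,c)}-\mathcal{QK}_{2}^{(a,b,c)}\}$ that you also reach. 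You instead sum the recurrence \eqref{t2} in the form $2\mathcal{QK}_{j}^{(a,b,c)}=\mathcal{QK}_{j+3}^{(a,b,c)}-\mathcal{QK}_{j+2}^{(a,b,c)}-\mathcal{QK}_{j+1}^{(a,b,c)}$ and telescope; I checked the bookkeeping and it is right: the shifted sums contribute $-S_{n}$, the upper boundary gives $\mathcal{QK}_{n+3}^{(a,b,c)}-\mathcal{QK}_{n+1}^{(a,b,c)}=\mathcal{QK}_{n+2}^{(a,b,c)}+2\mathcal{QK}_{n}^{(a,b,c)}$ by one more application of \eqref{t2}, and the lower boundary gives $\mathcal{QK}_{0}^{(a,b,c)}-\mathcal{QK}_{2}^{(a,b,c)}$, which your Binet computation (using $\omega_{1}^{2}=\omega_{2}$, $\omega_{2}^{2}=\omega_{1}$) correctly identifies with $-3\Theta+(1-\omega_{2})\Phi_{1}+(1-\omega_{1})\Phi_{2}$. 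What your route buys is that the factor $\tfrac13$ falls out of the recurrence itself (the characteristic polynomial at $x=1$ gives $1-1-1-2=-3$) rather than from manipulating $1/(\omega_{k}-1)$, and the argument would apply verbatim to any quaternion sequence satisfying \eqref{t2}, with only the constant term depending on initial data; the paper's route avoids invoking the recurrence but pays for it with the root-of-unity arithmetic. Your sketched alternative at the end is essentially the paper's proof.
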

\begin{proof}
Using Theorem \ref{bin} and applying Eqs. (\ref{mod}) for the unrestricted modified third-order Jacobsthal quaternions, we have
\begin{align*}
\sum_{j=0}^{n}\mathcal{QK}_{j}^{(a,b,c)}&=\Theta \sum_{j=0}^{n}2^{j}+\sum_{j=0}^{n}\mathcal{QM}_{k}^{(a,b,c)}\\
&=\Theta \left(2^{n+1}-1\right)+\sum_{j=0}^{n}\left(\Phi_{1}\omega_{1}^{j}+\Phi_{2}\omega_{2}^{j}\right)\\
&=\Theta \left(2^{n+1}-1\right)+\Phi_{1} \left(\frac{\omega_{1}^{n+1}-1}{\omega_{1}-1}\right)+\Phi_{2} \left(\frac{\omega_{2}^{n+1}-1}{\omega_{2}-1}\right)\\
&=\Theta \left(2^{n+1}-1\right)+\frac{1}{3}\left\lbrace \begin{array}{cc} \Phi_{1}\left(\omega_{1}^{n}-\omega_{1}^{n+1}-\omega_{2}+1\right)\\
+\Phi_{2}\left(\omega_{2}^{n}-\omega_{2}^{n+1}-\omega_{1}+1\right)
 \end{array} \right\rbrace.
\end{align*}
Using $\mathcal{QM}_{n}^{(a,b,c)}=\omega_{1}^{n}\Phi_{1}+\omega_{2}^{n}\Phi_{2}$ and $\omega_{1}\omega_{2}=1$, we have
\begin{align*}
\sum_{j=0}^{n}\mathcal{QK}_{j}^{(a,b,c)}&=\frac{1}{3}\left\lbrace \begin{array}{cc}  \mathcal{QK}_{n+2}^{(a,b,c)}+2\mathcal{QK}_{n}^{(a,b,c)}\\
+ \mathcal{QK}_{0}^{(a,b,c)}-\mathcal{QK}_{2}^{(a,b,c)}
 \end{array} \right\rbrace,
\end{align*}
which is the desired result.
\end{proof}

A generating function for the unrestricted modified third-order Jacobsthal quaternions can be found in the following theorem.
\begin{thm}[Generating function]
A generating function for the unrestricted modified third-order Jacobsthal quaternions is
$$\sum_{j=0}^{\infty}\mathcal{QK}_{j}^{(a,b,c)}x^{j}=\frac{\left\lbrace \begin{array}{cc}  
\Theta (1+x+x^{2})\\
\Phi_{1} (1+(\omega_{1}-1)x+2\omega_{2}x^{2})\\
\Phi_{2} (1+(\omega_{2}-1)x+2\omega_{1}x^{2})
 \end{array} \right\rbrace}{1-x-x^{2}-2x^{3}}.$$
\end{thm}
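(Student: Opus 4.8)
The plan is to start from the Binet-like formula of Theorem \ref{bin}, write $\mathcal{QK}_{n}^{(a,b,c)}=2^{n}\Theta+\omega_{1}^{n}\Phi_{1}+\omega_{2}^{n}\Phi_{2}$, and split the power series into three geometric series. Since $\Theta$, $\Phi_{1}$ and $\Phi_{2}$ do not depend on $n$, they can be factored out of the sums:
\begin{align*}
\sum_{j=0}^{\infty}\mathcal{QK}_{j}^{(a,b,c)}x^{j}
&=\Theta\sum_{j=0}^{\infty}(2x)^{j}+\Phi_{1}\sum_{j=0}^{\infty}(\omega_{1}x)^{j}+\Phi_{2}\sum_{j=0}^{\infty}(\omega_{2}x)^{j}\\
&=\frac{\Theta}{1-2x}+\frac{\Phi_{1}}{1-\omega_{1}x}+\frac{\Phi_{2}}{1-\omega_{2}x},
\end{align*}
an identity of formal power series (equivalently valid for $|x|$ small).

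Next I would recombine the three fractions over the common denominator $(1-2x)(1-\omega_{1}x)(1-\omega_{2}x)$. From $\omega_{1}=\omega_{2}^{2}$ and $\omega_{2}^{3}=1$ one has $\omega_{1}\omega_{2}=1$ and $\omega_{1}+\omega_{2}=-1$, so $(1-\omega_{1}x)(1-\omega_{2}x)=1+x+x^{2}$ and therefore
\[
(1-2x)(1-\omega_{1}x)(1-\omega_{2}x)=(1-2x)(1+x+x^{2})=1-x-x^{2}-2x^{3},
\]
which is exactly the denominator claimed; alternatively this denominator can be read off directly from the recurrence (\ref{t2}).

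It then remains to simplify the numerator, which after clearing denominators equals
\[
\Theta(1-\omega_{1}x)(1-\omega_{2}x)+\Phi_{1}(1-2x)(1-\omega_{2}x)+\Phi_{2}(1-2x)(1-\omega_{1}x).
\]
The first product is $1+x+x^{2}$. For the second, $(1-2x)(1-\omega_{2}x)=1-(2+\omega_{2})x+2\omega_{2}x^{2}$, and since $-(2+\omega_{2})=\omega_{1}-1$ by $\omega_{1}+\omega_{2}=-1$, this equals $1+(\omega_{1}-1)x+2\omega_{2}x^{2}$; symmetrically the third product equals $1+(\omega_{2}-1)x+2\omega_{1}x^{2}$. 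Substituting these back reproduces the numerator in the statement.

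The only slightly delicate part is the partial-fraction bookkeeping together with the repeated use of the relations $\omega_{1}+\omega_{2}=-1$ and $\omega_{1}\omega_{2}=1$; there is no genuine obstacle, since convergence is automatic in the ring of formal power series and each step is an elementary algebraic identity. One could instead avoid the Binet formula entirely: set $g(x)=\sum_{j\ge0}\mathcal{QK}_{j}^{(a,b,c)}x^{j}$, multiply by $1-x-x^{2}-2x^{3}$, use (\ref{t2}) to annihilate all coefficients of $x^{j}$ with $j\ge3$, and read off the numerator from $\mathcal{QK}_{0}^{(a,b,c)}$, $\mathcal{QK}_{1}^{(a,b,c)}$, $\mathcal{QK}_{2}^{(a,b,c)}$; but re-expressing the resulting polynomial in terms of $\Theta,\Phi_{1},\Phi_{2}$ still needs the same Binet identities.
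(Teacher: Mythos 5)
Your argument is correct, but it takes a genuinely different route from the paper. You start from the Binet-like formula $\mathcal{QK}_{n}^{(a,b,c)}=2^{n}\Theta+\omega_{1}^{n}\Phi_{1}+\omega_{2}^{n}\Phi_{2}$, sum three geometric series to get $\Theta/(1-2x)+\Phi_{1}/(1-\omega_{1}x)+\Phi_{2}/(1-\omega_{2}x)$, and recombine over the common denominator $(1-2x)(1-\omega_{1}x)(1-\omega_{2}x)=1-x-x^{2}-2x^{3}$; the numerator then falls out of the symmetric-function relations $\omega_{1}+\omega_{2}=-1$, $\omega_{1}\omega_{2}=1$. The paper instead follows the second route you only sketch at the end: it multiplies $g_{\mathcal{K}}(x)$ by $1-x-x^{2}-2x^{3}$, uses the recurrence (\ref{t2}) to kill all coefficients of $x^{j}$ for $j\geq 3$, rewrites the surviving term $\mathcal{QK}_{2}^{(a,b,c)}-\mathcal{QK}_{1}^{(a,b,c)}-\mathcal{QK}_{0}^{(a,b,c)}$ as $2\mathcal{QK}_{-1}^{(a,b,c)}$, and only then invokes the Binet expressions for $\mathcal{QK}_{-1}^{(a,b,c)}$, $\mathcal{QK}_{0}^{(a,b,c)}$, $\mathcal{QK}_{1}^{(a,b,c)}$ to recover the stated numerator. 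Your partial-fraction version makes the structure of the answer (one simple pole per root of the characteristic polynomial, with residue $\Theta$, $\Phi_{1}$, $\Phi_{2}$ respectively) more transparent and avoids the slightly awkward appeal to a negatively indexed quaternion, while the paper's version needs only the recurrence and three initial values and so generalizes to situations where no closed Binet form is available. Both computations check out and yield the same numerator, so either proof is acceptable.
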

\begin{proof}
Let us define $g_{\mathcal{K}}(x)=\sum_{j=0}^{\infty}\mathcal{QK}_{j}^{(a,b,c)}x^{j}$. Multiplying this equation by $1$, $-x$, $-x^{2}$ and $-2x^{3}$, respectively, and summing the last equations, we obtain
\begin{align*}
(1-x-x^{2}-2x^{3})g_{\mathcal{K}}(x)&=\mathcal{QK}_{0}^{(a,b,c)}+\left(\mathcal{QK}_{1}^{(a,b,c)}-\mathcal{QK}_{0}^{(a,b,c)}\right)x\\
&\ \ + \left(\mathcal{QK}_{2}^{(a,b,c)}-\mathcal{QK}_{1}^{(a,b,c)}-\mathcal{QK}_{0}^{(a,b,c)}\right)x^{2}\\
&=\mathcal{QK}_{0}^{(a,b,c)}+\left(\mathcal{QK}_{1}^{(a,b,c)}-\mathcal{QK}_{0}^{(a,b,c)}\right)x+2\mathcal{QK}_{-1}^{(a,b,c)}x^{2}.
\end{align*}
Also we have
\begin{align*}
2\mathcal{QK}_{-1}^{(a,b,c)}&=2\cdot2^{-1}\Theta +2\omega_{1}^{-1}\Phi_{1}+2\omega_{2}^{-1}\Phi_{2},\\
&=\Theta +2\omega_{2}\Phi_{1}+2\omega_{1}\Phi_{2},\\
\mathcal{QK}_{0}^{(a,b,c)}&=\Theta +\Phi_{1}+\Phi_{2},\\
\mathcal{QK}_{1}^{(a,b,c)}&=2\Theta +\omega_{1}\Phi_{1}+\omega_{2}\Phi_{2}.
\end{align*}
The theorem is proven using the previous equalities.
\end{proof}

We give Cassini's identity for the unrestricted modified third-order Jacobsthal quaternions. We calculate this identity by using the multiplication rules for quaternions.

\begin{thm}[Cassini like-identity for the unrestricted modified third-order Jacobsthal quaternions]
Let $n$ be integer such that $n\geq 1$. Then,
\begin{align*}
\mathcal{QK}_{n+1}^{(a,b,c)}\cdot \mathcal{QK}_{n-1}^{(a,b,c)}-\left(\mathcal{QK}_{n}^{(a,b,c)}\right)^{2}&=\left\lbrace \begin{array}{cc}  
\mathcal{QM}_{n+1}^{(a,b,c)}\cdot \mathcal{QM}_{n-1}^{(a,b,c)}- \left(\mathcal{QM}_{n}^{(a,b,c)}\right)^{2}\\
+ 2^{n}\Theta \left(2\mathcal{QM}_{n+2}^{(a,b,c)}-\mathcal{QM}_{n}^{(a,b,c)}\right)\\
+ 2^{n-1}\left(\mathcal{QM}_{n+1}^{(a,b,c)}-2\mathcal{QM}_{n}^{(a,b,c)}\right)\Theta 
 \end{array} \right\rbrace ,
\end{align*}
where $\mathcal{QM}_{n}^{(a,b,c)}$ is as in Theorem \ref{bin}.
\end{thm}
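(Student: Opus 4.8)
The plan is to reduce everything to the Binet-like decomposition $\mathcal{QK}_{m}^{(a,b,c)}=2^{m}\Theta+\mathcal{QM}_{m}^{(a,b,c)}$ established in Theorem \ref{bin}. Substituting this into both terms of the left-hand side and expanding the products — keeping the factors in order, since quaternion multiplication is non-commutative — gives, with the abbreviation $\mathcal{QM}_{m}:=\mathcal{QM}_{m}^{(a,b,c)}$,
$$\mathcal{QK}_{n+1}^{(a,b,c)}\cdot \mathcal{QK}_{n-1}^{(a,b,c)}=2^{2n}\Theta^{2}+2^{n+1}\Theta\,\mathcal{QM}_{n-1}+2^{n-1}\mathcal{QM}_{n+1}\,\Theta+\mathcal{QM}_{n+1}\,\mathcal{QM}_{n-1}$$
and
$$\left(\mathcal{QK}_{n}^{(a,b,c)}\right)^{2}=2^{2n}\Theta^{2}+2^{n}\Theta\,\mathcal{QM}_{n}+2^{n}\mathcal{QM}_{n}\,\Theta+\mathcal{QM}_{n}^{2}.$$
On subtracting, the $2^{2n}\Theta^{2}$ terms cancel, so what remains is $\mathcal{QM}_{n+1}\mathcal{QM}_{n-1}-\mathcal{QM}_{n}^{2}$ together with the four mixed terms $2^{n+1}\Theta\,\mathcal{QM}_{n-1}+2^{n-1}\mathcal{QM}_{n+1}\,\Theta-2^{n}\Theta\,\mathcal{QM}_{n}-2^{n}\mathcal{QM}_{n}\,\Theta$.

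Next I would collect the mixed terms according to the side on which $\Theta$ sits: the two with $\Theta$ on the left combine to $2^{n}\Theta\left(2\mathcal{QM}_{n-1}-\mathcal{QM}_{n}\right)$, and the two with $\Theta$ on the right combine to $2^{n-1}\left(\mathcal{QM}_{n+1}-2\mathcal{QM}_{n}\right)\Theta$. At this point the only discrepancy between what we have and the claimed formula is that the statement carries $\mathcal{QM}_{n+2}$ where we have $\mathcal{QM}_{n-1}$.

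Then I would invoke the periodicity of $(M_{n})$. From Eq. (\ref{mod1}) one reads off immediately that $M_{n+3}=M_{n}$ for every integer $n$, whence $\mathcal{QM}_{n+3}^{(a,b,c)}=\mathcal{QM}_{n}^{(a,b,c)}$ and in particular $\mathcal{QM}_{n-1}^{(a,b,c)}=\mathcal{QM}_{n+2}^{(a,b,c)}$. Replacing $\mathcal{QM}_{n-1}$ by $\mathcal{QM}_{n+2}$ in the left-$\Theta$ block would then turn it into $2^{n}\Theta\left(2\mathcal{QM}_{n+2}-\mathcal{QM}_{n}\right)$, and assembling the three pieces reproduces exactly the right-hand side in the statement.

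The one place where care will be genuinely needed is the expansion in the first step: because $\Theta$ does not commute with the $\mathcal{QM}_{m}$, one must not merge $\Theta\,\mathcal{QM}_{m}$ with $\mathcal{QM}_{m}\,\Theta$, and each of the four cross-terms must be carried along with its correct power of $2$ and its correct left/right placement. Everything after that is formal rearrangement plus the $3$-periodicity of $M_{n}$; note also that the hypothesis $n\geq 1$ serves only to keep $\mathcal{QK}_{n-1}^{(a,b,c)}$ within the range of Definition \ref{d1}.
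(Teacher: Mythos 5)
Your proposal is correct and follows essentially the same route as the paper: substitute the Binet-like decomposition $\mathcal{QK}_{m}^{(a,b,c)}=2^{m}\Theta+\mathcal{QM}_{m}^{(a,b,c)}$, expand while respecting non-commutativity, cancel the $2^{2n}\Theta^{2}$ terms, group the cross-terms by the side on which $\Theta$ appears, and finally replace $\mathcal{QM}_{n-1}^{(a,b,c)}$ by $\mathcal{QM}_{n+2}^{(a,b,c)}$ via the $3$-periodicity of $M_{n}$. Your write-up is in fact slightly more explicit than the paper's about why the left-$\Theta$ and right-$\Theta$ blocks must be kept separate.
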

\begin{proof}
Applying Theorem \ref{bin}, we have that
\begin{align*}
\mathcal{QK}_{n+1}^{(a,b,c)}\cdot \mathcal{QK}_{n-1}^{(a,b,c)}&-\left(\mathcal{QK}_{n}^{(a,b,c)}\right)^{2}\\
&=\left(2^{n+1}\Theta+\mathcal{QM}_{n+1}^{(a,b,c)}\right)\cdot \left(2^{n-1}\Theta+\mathcal{QM}_{n-1}^{(a,b,c)}\right)\\
&\ \ - \left(2^{n}\Theta+\mathcal{QM}_{n}^{(a,b,c)}\right)^{2}\\
&=\mathcal{QM}_{n+1}^{(a,b,c)}\cdot \mathcal{QM}_{n-1}^{(a,b,c)}- \left(\mathcal{QM}_{n}^{(a,b,c)}\right)^{2}\\
&\ \ + 2^{n}\Theta \left(2\mathcal{QM}_{n-1}^{(a,b,c)}-\mathcal{QM}_{n}^{(a,b,c)}\right)\\
&\ \ + 2^{n-1}\left(\mathcal{QM}_{n+1}^{(a,b,c)}-2\mathcal{QM}_{n}^{(a,b,c)}\right)\Theta .
\end{align*}
Using $\mathcal{QM}_{n-1}^{(a,b,c)}=\mathcal{QM}_{n+2}^{(a,b,c)}$. Finally, we obtain
\begin{align*}
\mathcal{QK}_{n+1}^{(a,b,c)}\cdot \mathcal{QK}_{n-1}^{(a,b,c)}&-\left(\mathcal{QK}_{n}^{(a,b,c)}\right)^{2}\\
&=\mathcal{QM}_{n+1}^{(a,b,c)}\cdot \mathcal{QM}_{n-1}^{(a,b,c)}- \left(\mathcal{QM}_{n}^{(a,b,c)}\right)^{2}\\
&\ \ + 2^{n}\Theta \left(2\mathcal{QM}_{n+2}^{(a,b,c)}-\mathcal{QM}_{n}^{(a,b,c)}\right)\\
&\ \ + 2^{n-1}\left(\mathcal{QM}_{n+1}^{(a,b,c)}-2\mathcal{QM}_{n}^{(a,b,c)}\right)\Theta .
\end{align*}
The proof is proved.
\end{proof}

Using the definition \ref{d1} and properties of sequence $\mathcal{QM}_{n}^{(a,b,c)}$, we can deduce the following relation
\begin{align*}
\mathcal{QM}_{n}^{(a,b,c)}&=\omega_{1}^{n}\Phi_{1}+\omega_{2}^{n}\Phi_{2}\\
&=M_{n}+M_{n+a}\textbf{i}+M_{n+b}\textbf{j}+M_{n+c}\textbf{k}\\
&=M_{n}\left(1+X_{a+1}\textbf{i}+X_{b+1}\textbf{j}+X_{c+1}\textbf{k}\right)\\
&\ \ - M_{n-1}\left(X_{a}\textbf{i}+X_{b}\textbf{j}+X_{c}\textbf{k}\right),
\end{align*}
where $$X_{n}=\left\{ 
\begin{array}{ccc}
0 & \textrm{if}& \mymod{n}{0}{3} \\ 
1& \textrm{if} & \mymod{n}{1}{3} \\ 
-1& \textrm{if} & \mymod{n}{2}{3}
\end{array}
\right. $$

Matrix generators play an important role in the theory of the third-order Jacobsthal numbers and the third-order Jacobsthal quaternions (see, for example \cite{Cer1,Cer2}). We derive the matrix representation of the unrestricted modified third-order Jacobsthal quaternions.

\begin{defn}
Unrestricted modified third-order Jacobsthal quaternion matrix $\textbf{J}_{n}(\mathcal{QK})$ is defined by $$\textbf{J}_{n}(\mathcal{QK})=\left[
\begin{array}{ccc}
\mathcal{QK}_{n+3}^{(a,b,c)}& \mathcal{QK}_{n+4}^{(a,b,c)}-\mathcal{QK}_{n+3}^{(a,b,c)}& 2\mathcal{QK}_{n+2}^{(a,b,c)} \\ 
\mathcal{QK}_{n+2}^{(a,b,c)}& \mathcal{QK}_{n+3}^{(a,b,c)}-\mathcal{QK}_{n+2}^{(a,b,c)}& 2\mathcal{QK}_{n+1}^{(a,b,c)} \\ 
\mathcal{QK}_{n+1}^{(a,b,c)}& \mathcal{QK}_{n+2}^{(a,b,c)}-\mathcal{QK}_{n+1}^{(a,b,c)}& 2\mathcal{QK}_{n}^{(a,b,c)}
\end{array}
\right],$$ for all $n\geq 0$.
\end{defn}

\begin{thm}
Let $n\geq 0$ be an integer. Then
$$\textbf{J}_{n}(\mathcal{QK})=\textbf{J}_{0}(\mathcal{QM})\cdot \left[
\begin{array}{ccc}
1& 1& 2 \\ 
1&0& 0\\ 
0& 1& 0
\end{array}
\right]^{n}.
$$
\end{thm}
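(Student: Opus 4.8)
The plan is to run an induction on $n$ whose engine is the recurrence~(\ref{t2}). Write $Q=\left[\begin{array}{ccc}1&1&2\\1&0&0\\0&1&0\end{array}\right]$ for the $3\times 3$ matrix appearing on the right-hand side; it is the companion-type matrix of the recursion $X_{m+3}=X_{m+2}+X_{m+1}+2X_{m}$, which the sequence $\mathcal{QK}_{m}^{(a,b,c)}$ satisfies by~(\ref{t2}) (the quaternionic lift of~(\ref{rec3})). The core of the argument is the one-move identity $\textbf{J}_{n}(\mathcal{QK})=\textbf{J}_{n-1}(\mathcal{QK})\cdot Q$ for all $n\geq 1$; granting it, a one-line induction yields $\textbf{J}_{n}(\mathcal{QK})=\textbf{J}_{0}(\mathcal{QK})\cdot Q^{n}$, and it remains only to match the base term with the matrix in the statement.

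To establish the one-move identity I would form the product $\textbf{J}_{n-1}(\mathcal{QK})\cdot Q$ and compare it with $\textbf{J}_{n}(\mathcal{QK})$ column by column. In each row, the first column of the product is $\mathcal{QK}_{m+2}^{(a,b,c)}+\bigl(\mathcal{QK}_{m+3}^{(a,b,c)}-\mathcal{QK}_{m+2}^{(a,b,c)}\bigr)=\mathcal{QK}_{m+3}^{(a,b,c)}$ for the appropriate shift $m$, which is precisely the first column of $\textbf{J}_{n}(\mathcal{QK})$; the third column of the product is simply twice the first column of $\textbf{J}_{n-1}(\mathcal{QK})$, i.e.\ the third column of $\textbf{J}_{n}(\mathcal{QK})$. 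The second column is the only one that calls for a substitution: one must use~(\ref{t2}) in the rearranged form $\mathcal{QK}_{m+4}^{(a,b,c)}-\mathcal{QK}_{m+3}^{(a,b,c)}=\mathcal{QK}_{m+2}^{(a,b,c)}+2\mathcal{QK}_{m+1}^{(a,b,c)}$, together with its index shifts (obtained from~(\ref{t2}) just as~(\ref{rec1})--(\ref{rec2}) were obtained from~(\ref{rec})). One right-multiplies by $Q$ exactly as in the statement; since $Q$ has integer entries the resulting matrix product over the quaternions is unambiguous, but the order should be kept fixed.

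For the base case one reads off $\textbf{J}_{0}(\mathcal{QK})$ and rewrites it through the Binet-like decomposition $\mathcal{QK}_{m}^{(a,b,c)}=2^{m}\Theta+\mathcal{QM}_{m}^{(a,b,c)}$ of Theorem~\ref{bin}. Since the entries of $\textbf{J}_{0}(\cdot)$ depend additively on the underlying sequence, $\textbf{J}_{0}(\mathcal{QK})$ separates into $\textbf{J}_{0}(\mathcal{QM})$ plus the matrix built from the purely geometric part $m\mapsto 2^{m}\Theta$; reconciling that geometric contribution with the stated form---noting that $m\mapsto 2^{m}\Theta$ obeys the same recursion, so its $\textbf{J}$-matrix is again governed by $Q$---is the one place where I would proceed with care. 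The inductive step, by contrast, is a purely mechanical column check powered by~(\ref{t2}), so the only genuine obstacle is this base-term bookkeeping.
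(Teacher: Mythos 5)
Your proposal follows essentially the same route as the paper: an induction on $n$ whose engine is the single-step identity $\textbf{J}_{r}(\mathcal{QK})\cdot Q=\textbf{J}_{r+1}(\mathcal{QK})$, verified column by column from the recurrence (\ref{t2}) (the paper compresses this verification into the phrase ``by simple calculation,'' but it is exactly your column check, with the second column being the only one that uses (\ref{t2}) nontrivially). So the inductive step is fine and matches the paper.

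The one place where you hesitate --- the base case --- is in fact where the statement itself goes wrong, and your instinct to ``proceed with care'' is justified: the reconciliation you hope for cannot be carried out. Your induction delivers $\textbf{J}_{n}(\mathcal{QK})=\textbf{J}_{0}(\mathcal{QK})\cdot Q^{n}$, whereas the theorem asserts $\textbf{J}_{0}(\mathcal{QM})\cdot Q^{n}$. By the Binet-like decomposition of Theorem~\ref{bin}, $\textbf{J}_{0}(\mathcal{QK})$ equals $\textbf{J}_{0}(\mathcal{QM})$ plus the matrix built from $m\mapsto 2^{m}\Theta$, and that extra matrix is not zero (its $(3,1)$ entry, for instance, is $2\Theta$). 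Hence the two base matrices genuinely differ, and the statement as printed is a typo for $\textbf{J}_{0}(\mathcal{QK})$; the paper's own base case (``for $n=0$ the result is obvious'') is only obvious with $\mathcal{QK}$ in place of $\mathcal{QM}$, and its inductive step silently conflates the two symbols. In short: your argument is the paper's argument, correctly executed up to the point where the paper's statement itself breaks down; the fix is to replace $\textbf{J}_{0}(\mathcal{QM})$ by $\textbf{J}_{0}(\mathcal{QK})$ rather than to look for a cancellation of the geometric part.
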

\begin{proof}
(by induction on $n$) If $n=0$ then assuming that the matrix to the power 0 is the
identity matrix, the result is obvious. Now assume that for any $r\geq 0$ holds $$\textbf{J}_{r}(\mathcal{QK})=\textbf{J}_{0}(\mathcal{QM})\cdot \left[
\begin{array}{ccc}
1& 1& 2 \\ 
1&0& 0\\ 
0& 1& 0
\end{array}
\right]^{r}.$$ By simple calculation using induction's hypothesis and relation (\ref{t2}), we have 
\begin{align*}
\textbf{J}_{0}(\mathcal{QM})\cdot \left[
\begin{array}{ccc}
1& 1& 2 \\ 
1&0& 0\\ 
0& 1& 0
\end{array}
\right]^{r+1}&=\textbf{J}_{0}(\mathcal{QM})\cdot \left[
\begin{array}{ccc}
1& 1& 2 \\ 
1&0& 0\\ 
0& 1& 0
\end{array}
\right]^{r}\cdot \left[
\begin{array}{ccc}
1& 1& 2 \\ 
1&0& 0\\ 
0& 1& 0
\end{array}
\right]\\
&=\textbf{J}_{r}(\mathcal{QK})\cdot \left[
\begin{array}{ccc}
1& 1& 2 \\ 
1&0& 0\\ 
0& 1& 0
\end{array}
\right]\\
&=\textbf{J}_{r+1}(\mathcal{QK}),
\end{align*}
which ends the proof.
\end{proof}

\section{Appendix}
We mentioned above that modified third-order Jacobsthal quaternions can be defined by taking $a=1$, $b=2$ and $c=3$. In this section, we calculate some properties of modified third-order Jacobsthal quaternions. Let $K_{n}^{(3)}$ be the $n$-th modified third-order Jacobsthal number. We have
\begin{align*}
\mathcal{QK}_{n}^{(1,2,3)}&=K_{n}^{(3)}+K_{n+1}^{(3)}\textbf{i}+K_{n+2}^{(3)}\textbf{j}+K_{n+3}^{(3)}\textbf{k},\\
\mathcal{QK}_{n+3}^{(1,2,3)}&=\mathcal{QK}_{n}^{(1,2,3)}+7\cdot 2^{n}(1+2\textbf{i}+4\textbf{j}+8\textbf{k}),
\end{align*}
\begin{align*}
\mathcal{QK}_{n}^{(1,2,3)}&=2^{n}(1+2\textbf{i}+4\textbf{j}+8\textbf{k}) +\mathcal{QM}_{n}^{(1,2,3)},\\
\mathcal{QM}_{n}^{(1,2,3)}&=\omega_{1}^{n}\Phi_{1}+\omega_{2}^{n}\Phi_{2},
\end{align*}
\begin{align*}
Nr\left(\mathcal{QK}_{n}^{(1,2,3)}\right)&=\left\lbrace \begin{array}{cc} 
85\cdot 2^{2n}+7\cdot 2^{n+1}M_{n}+2^{n+1}M_{n-1}\\
+ M_{2n}+M_{2(n+1)}+M_{2(n+2)}+M_{2(n+3)}+8 \end{array} \right\rbrace,\\
\sum_{j=0}^{n}\mathcal{QK}_{j}^{(1,2,3)}&=\frac{1}{3}\left\lbrace \begin{array}{cc} \mathcal{QK}_{n+2}^{(1,2,3)}+2\mathcal{QK}_{n}^{(1,2,3)}\\
-3\Theta +(1-\omega_{2})\Phi_{1}+(1-\omega_{1})\Phi_{2}  \end{array} \right\rbrace\\
\sum_{j=0}^{\infty}\mathcal{QK}_{j}^{(1,2,3)}x^{j}&=\frac{\left\lbrace \begin{array}{cc}  
\Theta (1+x+x^{2})\\
\Phi_{1} (1+(\omega_{1}-1)x+2\omega_{2}x^{2})\\
\Phi_{2} (1+(\omega_{2}-1)x+2\omega_{1}x^{2})
 \end{array} \right\rbrace}{1-x-x^{2}-2x^{3}},
\end{align*}
where $\Phi_{1}=1+\omega_{1}\textbf{i}+\omega_{1}^{2}\textbf{j}+\textbf{k}$ and $\Phi_{2}=1+\omega_{2}\textbf{i}+\omega_{2}^{2}\textbf{j}+\textbf{k}$.

\section{Conclusion}
We defined new quaternions by using definitions of modified third-order Jacobsthal sequence and modified third-order Jacobsthal numbers. The properties of those quaternions were examined. Some theorems about these numbers were presented. Here, we extend the usual definitions into a wider structure by using arbitrary modified third-order Jacobsthal quaternions.

\section{Declarations}
\textbf{Conflict of interest}. The author declares that he has no conflict of interest.

% ------------------------------------------------------------------------

%\subsection*{Acknowledgment}

% ------------------------------------------------------------------------
\end{document}